\newtheorem{theorem}{Theorem}[section]
\newtheorem{proposition}[theorem]{Proposition}
\newtheorem{corollary}[theorem]{Corollary}
\theoremstyle{definition}
\numberwithin{equation}{section}
\begin{document}

\baselineskip=15pt

\title[Deformation theory of Cartan geometries, II]{Deformation theory of holomorphic Cartan geometries, II}

\author[I. Biswas]{Indranil Biswas}

\address{School of Mathematics, Tata Institute of Fundamental
Research, Homi Bhabha Road, Mumbai 400005, India}

\email{indranil@math.tifr.res.in}

\author[S. Dumitrescu]{Sorin Dumitrescu}

\address{Universit\'e C\^ote d'Azur, CNRS, LJAD, France}

\email{dumitres@unice.fr}

\author[G. Schumacher]{Georg Schumacher}

\address{Fachbereich Mathematik und Informatik, Philipps-Universit\"at
Marburg, Lahnberge, Hans-Meerwein-Strasse, D-35032 Marburg, Germany}

\email{schumac@mathematik.uni-marburg.de}

\subjclass[2010]{32G13, 53C55}

\keywords{Cartan geometry, flat connection, Atiyah bundle}

\date{}

\begin{abstract}
In this continuation of \cite{BDS}, we investigate the deformations of holomorphic Cartan geometries where the underlying
complex manifold is allowed to move. The space of infinitesimal deformations of a flat holomorphic Cartan geometry is computed. We show
that the natural forgetful map, from the infinitesimal deformations of a flat holomorphic Cartan geometry to the
infinitesimal deformations of the underlying flat principal bundle on the
topological manifold, is an isomorphism.
\end{abstract}

\maketitle

\section{Introduction}\label{se1}

In \cite{BDS} we studied the deformations of holomorphic Cartan geometries on a fixed 
compact complex manifold. Here we consider the more general deformations of holomorphic 
Cartan geometries where the underlying compact complex manifold is allowed to move.
We also investigate the deformations of the flat holomorphic Cartan geometries.

Let $G$ be a connected Lie group and $H\, \subset\, G$ a 
connected closed Lie subgroup. As a consequence of the foundational work of Cartan and Ehresmann, a flat Cartan 
geometry with model $(G, \, H)$ on a compact manifold $M$ is determined by the following 
geometrical objects: a smooth principal $G$--bundle $E_G$ over $M$ endowed with a flat 
connection and a principal $H$--subbundle $E_H\, \subset\, E_G$ transverse to the integrable horizontal
distribution associated to the flat connection \cite{Eh} (see also the survey \cite{BD1}).

Fix a base point $x_0\, \in\, M$ and a point $z\, \in\, (E_H)_{x_0}$ in the fiber of $E_H$ over $x_0$.
The above properties imply that the pull-back of this principal $G$--bundle $E_G$ to the universal 
cover $\widetilde{M}$ of $M$ for $x_0$ is isomorphic to the trivial principal bundle $\widetilde{M} \times 
G\,\longrightarrow\, \widetilde{M}$ and the pull-back of $E_H$ to $\widetilde{M}$ is defined by a smooth map 
$\widetilde{M} \,\longrightarrow\, G/H$. The above mentioned transversality condition is equivalent to 
the statement that this map $\widetilde{M} \,\longrightarrow\, G/H$ is a local diffeomorphism; it
is customary to call this map $\widetilde{M} \,\longrightarrow\, G/H$ the 
developing map of the (flat) Cartan geometry. The flat Cartan geometry on $M$ with model $(G,\, H)$ produces
a monodromy homomorphism $\rho \,:\, \pi_1(M,\, x_0) \,\longrightarrow\, G$.
The developing map is $\pi_1(M,\, x_0)$--equivariant with respect to the action of $\pi_1(M,\, x_0)$ on $\widetilde{M}$
via the deck transformations and the action of $\pi_1(M,\, x_0)$ on $G/H$ 
via the monodromy morphism $\rho$ and the left-translation action of $G$ on $G/H$ \cite{Eh} 
(see the expository works \cite{Sh,BD1}).

The above geometrical description of Ehresmann leads to the so-called 
Ehresmann-Thurston principle which states that the Riemann-Hilbert map associating to 
each flat Cartan geometry its monodromy morphism $\rho \,:\, \pi_1(M)\, 
\longrightarrow\, G$ (uniquely determined up to inner automorphisms of $G$) is a local 
homeomorphism between the moduli space of flat Cartan geometries with model 
$(G,\,H)$ on $X$ and the space of group homomorphisms from $\pi_1(M)$ to $G$ (modulo the 
action of $G$ acting on the target $G$ by inner conjugation) \cite[p.~115, Theorem 2.1]{BG} (see also \cite{CEG}).

Now let $G$ be a connected complex Lie group and $H\, \subset\, G$ a connected complex closed Lie subgroup. Then the model manifold $G/H$ 
inherits a $G$-invariant complex structure. Any flat Cartan geometry with model $(G,\,H)$ induces on $M$ an underlying complex structure (for 
which the above $G$-bundle $E_G$, its flat connection and the transverse $H$-subbundle $E_H$ are all holomorphic). Hence there is a natural 
forgetful map from the deformation space of flat Cartan geometries with model $(G,\,H)$ to the Kuranishi space of $M$. In the particular case 
where $M$ is a surface and $G\,=\,{\rm PSL}(2, \mathbb C)$, with $H \subset G$ being the stabilizer of a point in the complex projective 
line, a flat Cartan geometry with model $(G,\,H)$ determines a complex projective structure on $S$ and hence an underlying structure of 
Riemann surface. Led by the work of Klein and Poincar\'e, the complex projective structures had a major role in the formulation and (some of) 
the proofs of the uniformization theorem for Riemann surfaces (see \cite{Gu} or \cite{StG}). Indeed, the uniformization theorem for Riemann 
surfaces ensures the existence on any Riemann surface of a compatible complex projective structure (meaning it defines the same complex 
structure) with injective developing map.

Much more recently, the deformation space of flat Cartan geometries with model \linebreak $G\,=\,{\rm SL}(2, \mathbb C) \times {\rm SL}(2, 
\mathbb C)$ and $H\,=\,{\rm SL}(2, \mathbb C)$, diagonally embedded in $G$, and the associated forgetful map to the Kuranishi space were studied by 
Ghys in \cite{Gh}. Using this method, Ghys computed in \cite{Gh} the Kuranishi space of the parallelizable manifolds ${\rm SL}(2, \mathbb C) 
/ \Gamma$, where $\Gamma$ is a uniform lattice in ${\rm SL}(2, \mathbb C)$. Ghys proved that the forgetful map realizes an isomorphism 
between the deformation space of ${\rm SL}(2, \mathbb C) / \Gamma$ as flat Cartan geometry with the above model $(G,\,H)$ and the Kuranishi 
space of the complex manifold ${\rm SL}(2, \mathbb C) / \Gamma$. Moreover, the deformation space of ${\rm SL}(2, \mathbb C) / \Gamma$ as flat 
Cartan geometry is modeled on the germ, at the trivial morphism, of the algebraic variety of group homomorphisms from $\Gamma$ into ${\rm 
SL}(2, \mathbb C)$. In particular, for any uniform lattice $\Gamma$ with positive first Betti number this germ has positive dimension. Hence 
the corresponding parallelizable manifolds ${\rm SL}(2, \mathbb C) / \Gamma$ admit nontrivial deformations of the underlying complex 
structure. Those examples of flexible parallelizable manifolds associated to semi-simple complex Lie groups are exotic (by Raghunathan's 
rigidity results \cite{Ra} a compact quotient of a complex Lie group by a lattice has a rigid complex structure, if no local factor is 
isomorphic to ${\rm SL}(2, \mathbb C)$).

Our result below is a reformulation of the Ehresmann-Thurston principle in the context of flat holomorphic Cartan geometries with model $(G, \, H)$, where $G$ is a connected complex Lie group and $H\, \subset\, G$ a connected
complex closed Lie subgroup. 

Let $E_H$ be a holomorphic principal $H$-bundle on
a compact complex manifold $M$; the holomorphic principal $G$--bundle on $M$
obtained by extending the structure group of $E_H$ using the inclusion map
$H\, \hookrightarrow\, G$ will be denoted by $E_G$. Let
$$\vartheta\, :\,\text{At}(E_H)\, \stackrel{\sim}\longrightarrow\,\text{ad}(E_G)$$
be a flat holomorphic Cartan geometry on $M$ modeled on the pair $(G,\, H)$, where $\text{At}(E_H)$
is the Atiyah bundle for $E_H$ and $\text{ad}(E_G)$ is the adjoint bundle for $E_G$. The
isomorphism $\vartheta$ produces a flat holomorphic connection $\theta'$ on the principal $G$--bundle $E_G$.
Let
$$
{\mathcal I}_{CG}
$$
denote the space of all
infinitesimal deformations of the flat holomorphic Cartan geometry $(M,\, E_H,\, \theta)$
in the category of the flat holomorphic Cartan geometries.
Let
$$
{\mathcal I}_{FC}
$$
denote the space of all
infinitesimal deformations of the flat principal $G$--bundle $(E_G,\, \theta')$ on
the topological manifold $M$, where $\theta'$ is the above mentioned flat connection
on $E_G$ given by $\vartheta$. Associating to any flat holomorphic Cartan geometry $(E_H,\, \theta)$
the corresponding flat principal $G$--bundle on the underlying topological manifold we obtain
 a homomorphism
$$
\varphi\, :\, {\mathcal I}_{CG}\, \longrightarrow\, {\mathcal I}_{FC}\, .
$$
(see \eqref{icg} and \eqref{ifc}).

We prove the following (see Theorem \ref{thm1}):

\begin{theorem}
The above homomorphism $\varphi$ is an isomorphism.
\end{theorem}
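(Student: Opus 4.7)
The strategy is to identify both $\mathcal{I}_{CG}$ and $\mathcal{I}_{FC}$ with the single cohomology group $H^1(M,\mathcal{L})$, where $\mathcal{L}$ denotes the local system of flat sections of $(\mathrm{ad}(E_G),\nabla)$, with $\nabla$ the flat connection induced by $\vartheta$; equivalently, $\mathcal{L}$ corresponds to the representation $\mathrm{Ad}\circ\rho\colon \pi_1(M)\to\mathrm{Aut}(\mathfrak{g})$ associated to the monodromy $\rho$ of $\theta'$. Once both spaces are so identified, the theorem reduces to a direct verification that $\varphi$ becomes the identity on $H^1(M,\mathcal{L})$.

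For the target $\mathcal{I}_{FC}$, the identification is classical Weil-type deformation theory: flat principal $G$--bundles on the topological manifold $M$ are parametrized, up to isomorphism, by $\mathrm{Hom}(\pi_1(M),G)/G$, and hence their infinitesimal deformations at $\rho$ are computed by the group cohomology $H^1(\pi_1(M),\mathfrak{g}_{\mathrm{Ad}\rho})$, which the de Rham theorem for local systems identifies with $H^1(M,\mathcal{L})$.

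For $\mathcal{I}_{CG}$ the identification is more subtle, since an infinitesimal deformation of a flat holomorphic Cartan geometry a priori encodes a variation of the complex structure on $M$, of the holomorphic principal $H$--bundle $E_H$, and of the Cartan form $\vartheta$. I would first check that the sheaf of infinitesimal automorphisms of $(E_H,\vartheta)$ is precisely $\mathcal{L}$: such an automorphism is an $H$--invariant holomorphic vector field on $E_H$, i.e.\ a local holomorphic section $s$ of $\mathrm{At}(E_H)$, whose flow preserves $\vartheta$; under $\vartheta$ this section becomes $\vartheta(s)\in\mathrm{ad}(E_G)$, and the preservation condition is equivalent to $\nabla(\vartheta(s))=0$. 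Since a flat holomorphic Cartan geometry is locally equivalent to an open piece of $G/H$, hence locally rigid, the standard \v{C}ech deformation calculus then yields $\mathcal{I}_{CG}=H^1(M,\mathcal{L})$. Equivalently, by the Poincar\'e lemma for flat holomorphic bundles, the holomorphic de Rham complex $\mathrm{ad}(E_G)\otimes\Omega^{\bullet}_M$ with differential $\nabla$ resolves $\mathcal{L}$, and $\mathcal{I}_{CG}$ is its first hypercohomology.

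With both identifications in place, $\varphi$ is manifestly the identity on $H^1(M,\mathcal{L})$, because a \v{C}ech cocycle valued in $\mathcal{L}$ encodes the infinitesimal variation of the $G$--valued transition data, which simultaneously describes the monodromy variation of the Cartan geometry and of the underlying flat principal $G$--bundle. The main obstacle I foresee lies in the identification for $\mathcal{I}_{CG}$: one must justify rigorously that the apparently richer Cartan-geometric deformation data collapses to $H^1(M,\mathcal{L})$. This is the infinitesimal Ehresmann--Thurston principle in the complex-analytic category, and it relies crucially on the isomorphism $\vartheta\colon \mathrm{At}(E_H)\stackrel{\sim}{\to}\mathrm{ad}(E_G)$ to absorb the tangent directions of $M$ (via the Atiyah exact sequence) into $\mathrm{ad}(E_G)$, so that deformations of the complex structure on $M$ are already encoded in the flat cohomology of $\mathrm{ad}(E_G)$.
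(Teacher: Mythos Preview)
Your proposal is correct and follows essentially the same route as the paper's proof: both identify $\mathcal{I}_{FC}$ with $H^1(M,\mathcal{L})$ (the paper writes $\underline{\mathrm{ad}}(E_G)$ for your $\mathcal{L}$) via the classical representation-variety description, and both identify $\mathcal{I}_{CG}$ with the first hypercohomology of the $\nabla$-twisted holomorphic de~Rham complex of $\mathrm{ad}(E_G)$, which resolves $\mathcal{L}$. The only difference is that the paper grounds the $\mathcal{I}_{CG}$ identification in its earlier Corollary~\ref{cor1} for the two-term complex $\mathcal{B}_\bullet$ and then adjoins the degree-two curvature term to obtain the three-term complex $\widetilde{\mathcal{B}}_\bullet$, whereas you sketch it via local rigidity and a \v{C}ech argument on the automorphism sheaf; these are two phrasings of the same computation.
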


More generally, we study here the deformation space of (non necessarily flat) holomorphic Cartan geometries where the underlying complex 
structure of the manifold is allowed to move. We generalize in this broader context results previously obtained in \cite{BDS} (see 
Proposition \ref{prop2} and Corollary \ref{cor1}).

It should be mentioned that Proposition \ref{prop2} and Corollary \ref{cor1} correct and 
generalize Theorem 3.4 in \cite{BDS}\footnote{We thank Yasuhiro Wakabayashi who pointed 
out that the kernel of the exact sequence in the statement of the Theorem 3.4 is not the 
correct one (see \cite[Remark 6.4.5]{Wa}). It should be replaced with $H^0(X,\, Hom(TX 
,\, {\rm ad}(E_G)))$.}.

\section{Cartan geometry}

The holomorphic tangent (respectively, cotangent) bundle of a complex manifold $N$ will
be denoted by $TN$ (respectively, $\Omega^1_N$).

Let $G$ be a connected complex Lie group; its Lie algebra will be denoted by
$\mathfrak g$. Let $H\, \subset\, G$ be a connected
complex closed Lie subgroup with Lie algebra $\mathfrak h\, \subset\,\mathfrak g$.

Take a connected complex manifold $M$. Let
\begin{equation}\label{e1}
f\,: E_H\, \longrightarrow\, M
\end{equation}
be a holomorphic principal $H$--bundle on $M$. The action of $H$ on $E_H$ produces
a holomorphic action of $H$ on the holomorphic tangent bundle $TE_H$. Let
$$
df\, :\, TE_H\, \longrightarrow\, f^*TM
$$
be the differential of the projection $f$ in \eqref{e1}. Using the action of
$H$ on $E_H$, the kernel of $df$ is identified with the trivial holomorphic vector
bundle on $E_H$ with fiber $\mathfrak h$; such an identification is known as the
Maurer-Cartan form.

A holomorphic Cartan geometry on $M$ of type $G/H$ is a pair $(E_H,\, \theta)$, where
$E_H$ is a holomorphic principal $H$--bundle on $M$ and
\begin{equation}\label{e0}
\theta\, :\, TE_H\, \longrightarrow\, E_H\times {\mathfrak g}
\end{equation}
is a holomorphic homomorphism, such that
\begin{enumerate}
\item $\theta$ is an isomorphism,

\item $\theta$ is $H$--equivariant for the above action of $H$ on $TE_H$ and the
diagonal action of $H$ on $E_H\times {\mathfrak g}$ constructed using the
adjoint action of $H$ on ${\mathfrak g}$ and the above mentioned action of $H$ on $E_H$, and

\item the restriction of $\theta$ to $\text{kernel}(df)$ coincides with the
above identification of $\text{kernel}(df)$ with $E_H\times \mathfrak h$.
\end{enumerate}
(See \cite{Sh}.)

Since $\theta$ in \eqref{e0} is $H$--equivariant, it descends to a homomorphism between 
appropriate vector bundles over $M$. We now recall an equivalent reformulation of the above 
definition.

As in \eqref{e1}, let $E_H$ be a holomorphic principal $H$--bundle on $M$.
The action of $H$ on $TE_H$ produces an action of $H$ on the direct image $f_*TE_H$
over the trivial action of $H$ on $M$. Let
\begin{equation}\label{j1}
{\rm At}(E_H) \,=\, (f_*TE_H)^H \,=\, (TE_H)/H\, \longrightarrow\, M
\end{equation}
be the Atiyah bundles for $E_H$ \cite[p.~187, Theorem 1]{At}, where
$(f_*TE_H)^H\, \subset\, f_*TE_H$ is the $H$--invariant subbundle. Let
\begin{equation}\label{ead}
\text{ad}(E_H)\, :=\, (f_*\text{kernel}(df))^H\,=\, \text{kernel}(df)/H
\, \subset\, (TE_H)/H\,=\, {\rm At}(E_H)
\end{equation}
be the adjoint bundle of $E_H$. So we have a short exact sequence
of holomorphic vector bundles on $M$
\begin{equation}\label{e-1}
0\, \longrightarrow\, {\rm ad}(E_H)\, \, \stackrel{h_1}{\longrightarrow}\, {\rm At}(E_H)
\, \stackrel{df}{\longrightarrow}\, TM\, \longrightarrow\,0
\end{equation}
which is known as the \textit{Atiyah exact sequence} for $E_H$. We recall that
a holomorphic connection on $E_H$ is a holomorphic splitting of the
exact sequence in \eqref{e-1}, meaning a holomorphic homomorphism $\psi\, :\,
TM\, \longrightarrow\, \text{At}(E_H)$ such that $(df)\circ\psi\,=\, {\rm Id}_{TM}$, where
$df$ is the projection in \eqref{e-1} (see \cite[p~188, Definition]{At}).

Let $E_H(\mathfrak h)$ (respectively, $E_H(\mathfrak g)$) be the holomorphic
vector bundle over $M$ associated to the principal $H$--bundle $E_H$ for the
adjoint action of $H$ on $\mathfrak h$ (respectively, $\mathfrak g$). We note that
$E_H(\mathfrak h)$ is the adjoint vector bundle $\text{ad}(E_H)$. Let
\begin{equation}\label{e2}
E_G\, :=\, E_H\times^H G \, \longrightarrow\, M
\end{equation}
be the holomorphic principal $G$--bundle on $M$ obtained by extending the
structure group of $E_H$ using the inclusion map of $H$ in $G$. The above vector bundle
$E_H(\mathfrak g)$ evidently coincides with the adjoint bundle $\text{ad}(E_G)$ for $E_G$.
The inclusion map ${\mathfrak h}\, \hookrightarrow\, \mathfrak g$ produces a
short exact sequence of holomorphic vector bundles on $M$
\begin{equation}\label{e3}
0\, \longrightarrow\, {\rm ad}(E_H)\, \, \stackrel{h_2}{\longrightarrow}\, {\rm ad}(E_G)
\, \longrightarrow\, {\rm ad}(E_G)/{\rm ad}(E_H)\, \longrightarrow\,0\, .
\end{equation}

A holomorphic Cartan geometry on $M$ of type $G/H$ is a pair $(E_H,\, \vartheta)$, where
$E_H$ is a holomorphic principal $H$--bundle on $M$ and
\begin{equation}\label{e-3}
\vartheta\, :\, {\rm At}(E_H)\, \longrightarrow\, {\rm ad}(E_G)
\end{equation}
is a holomorphic isomorphism of vector bundles, such that
\begin{equation}\label{e-2}
\vartheta\circ h_1 \,=\,h_2\, ,
\end{equation}
where $h_1$ and $h_2$ are the homomorphisms in \eqref{e-1} and \eqref{e3} respectively.

Using \eqref{j1} it is straightforward to check that the above definition of a
holomorphic Cartan geometry on $M$ of type $G/H$ is equivalent to the definition
given earlier.

For any isomorphism $\vartheta$ as in \eqref{e-3} satisfying the equation in \eqref{e-2}, we have 
the following commutative diagram
\begin{equation}\label{e12}
\begin{matrix}
0 & \longrightarrow & {\rm ad}(E_H) & \stackrel{h_1}{\longrightarrow} & {\rm At}(E_H)
& \longrightarrow & TM & \longrightarrow & 0\\
&& \Vert && ~\Big\downarrow\vartheta && \Big\downarrow\\
0 & \longrightarrow & {\rm ad}(E_H) & \stackrel{\iota_1}{\longrightarrow} & {\rm ad}(E_G)
& \longrightarrow & {\rm ad}(E_G)/{\rm ad}(E_H) & \longrightarrow & 0
\end{matrix}
\end{equation}
\cite[Ch.~5]{Sh}; the above homomorphism $TM\,\longrightarrow\,{\rm ad}(E_G)/{\rm ad}(E_H)$
induced by $\vartheta$ is an isomorphism because $\vartheta$ is so.

Let
\begin{equation}\label{e-4}
0\, \longrightarrow\, {\rm ad}(E_G)\, \, \stackrel{h_3}{\longrightarrow}\, {\rm At}(E_G)\, :=\, (TE_G)/G
\, \longrightarrow\, TM\, \longrightarrow\,0
\end{equation}
be the Atiyah exact sequence for the principal $G$--bundle $E_G$ in \eqref{e2}.
The Atiyah bundle ${\rm At}(E_G)$ in \eqref{e-4}
can also be constructed using ${\rm At}(E_H)$ and ${\rm ad}(E_G)$; we now recall this
construction. Consider the embedding
\begin{equation}\label{em}
\text{ad}(E_H)\,\hookrightarrow\, {\rm At}(E_H)\oplus {\rm ad}(E_G)
\end{equation}
that sends any $v$ to $(h_1(v),\, -h_2(v))$, where $h_1$ and $h_2$ are the
homomorphisms in \eqref{e-1} and \eqref{e3} respectively. The Atiyah bundle 
${\rm At}(E_G)$ is the quotient bundle
\begin{equation}\label{b2}
{\rm At}(E_G)\,=\, ({\rm At}(E_H)\oplus {\rm ad}(E_G))/\text{ad}(E_H)
\end{equation} 
for this embedding. The map $h_3$ in 
\eqref{e-4} is given by the inclusion $h_2$ or $h_1$ of $\text{ad}(E_G)$ in
${\rm At}(E_H)\oplus {\rm ad}(E_G)$; note that they produce the same homomorphism to
the above quotient bundle $({\rm At}(E_H)\oplus {\rm ad}(E_G))/\text{ad}(E_H)$.

Let $(E_H,\, \vartheta)$ be a holomorphic Cartan geometry of type $G/H$ on $M$.
Then the homomorphism
$$
{\rm At}(E_H)\oplus {\rm ad}(E_G)\, \longrightarrow\, {\rm ad}(E_G),\, \ \
(v,\, w) \, \longmapsto\, \vartheta(v)+w
$$
produces a homomorphism
\begin{equation}\label{e10}
\theta'\, :\, {\rm At}(E_G)\,=\, ({\rm At}(E_H)\oplus
{\rm ad}(E_G))/\text{ad}(E_H)\, \longrightarrow\, {\rm ad}(E_G)\, ,
\end{equation}
because it vanishes on the subbundle $\text{ad}(E_H)$ in \eqref{em}; see \eqref{b2}.
It is straightforward to check that $\theta'\circ h_3\,=\, \text{Id}_{{\rm 
ad}(E_G)}$, where $h_3$ is the homomorphism in \eqref{e-4}. Consequently,
$\theta'$ gives a holomorphic splitting of the exact sequence in \eqref{e-4}. Therefore, 
$\theta'$ is a holomorphic connection on the principal $G$--bundle $E_G$ \cite{At}.

The curvature $\text{Curv}(\theta')$ of the connection $\theta'$ is a holomorphic section
$$
\text{Curv}(\theta')\, \in\, H^0(X,\, \text{ad}(E_G)\otimes \Omega^2_M)\, ,
$$
where $\Omega^i_M\, :=\, \bigwedge\nolimits^i (TM)^*$.

The Cartan geometry $(E_H,\, \vartheta)$ is called \textit{flat} if
$$
\text{Curv}(\theta')\,=\,0
$$
\cite[Ch.~5, \S~1, p.~177]{Sh}.

\section{Connection and differential operators}

Take a connected complex manifold $M$. Let
$$
f\,: E_H\, \longrightarrow\, M
$$
be a holomorphic principal $H$--bundle, and let
$$
\theta\, :\, TE_H\, \longrightarrow\, E_H\times {\mathfrak g}
$$
be a holomorphic isomorphism defining a holomorphic
Cartan geometry of type $G/H$ on $M$; see \eqref{e0}. Take
a nonempty open subset $U\, \subset\, M$. Let
$$
\varpi\, \in \, H^0\left(f^{-1}(U),\, (TE_H)\big\vert_{f^{-1}(U)}\right)^H \,=\,
H^0\big(f^{-1}(U),\, T(f^{-1}(U))\big)^H
$$
be an $H$--invariant holomorphic vector field on $f^{-1}(U)$. So we have
\begin{equation}\label{e4}
\theta(\varpi)\, \in \, H^0(f^{-1}(U),\, f^{-1}(U)\times {\mathfrak g})\,=\,
H^0\left(f^{-1}(U),\, (E_H\times {\mathfrak g})\big\vert_{f^{-1}(U)}\right)\, .
\end{equation}
In other words, $\theta(\varpi)$ is a ${\mathfrak g}$--valued holomorphic function
on $f^{-1}(U)$. Take any point $z\, \in\, f^{-1}(U)$, and also take a holomorphic
tangent vector
$$
v\, \in\, T_zE_H\, .
$$
For the function $\theta(\varpi)$ in \eqref{e4}, consider
\begin{equation}\label{e6}
v(\theta(\varpi))+ [\theta(v),\, \theta(\varpi)]\, \in\, {\mathfrak g}
\end{equation}
for the above tangent vector $v$.

First treat the case where $v\, \in\, \text{kernel}(df)(z)$; here $f$, as before,
is the projection of $E_H$ to $M$. Recall that the third condition in the definition
of a Cartan geometry says that the restriction of $\theta$ to $\text{kernel}(df)$
coincides with the identification of $\text{kernel}(df)$ with $E_H\times \mathfrak h$
given by the action of $H$ (the Maurer--Cartan form).

Since $\theta$ is $H$--equivariant, and $\varpi$ is $H$--invariant, it follows that
$\theta(\varpi)$ transforms as the inverse adjoint representation, of $H$
on $\mathfrak g$, under the $H$--action.
So under the infinitesimal $\mathfrak h$--action it equals $-[\theta(v),\, \theta(\varpi)]$
for the action of $v\, \in\, \mathfrak h$. This implies that
\begin{equation}\label{e5}
v(\theta(\varpi))+ [\theta(v),\, \theta(\varpi)]\,=\, 0\, ,
\end{equation}
because $v\, \in\, \text{kernel}(df)(z)$.

Next we investigate the action of $H$ on the construction in \eqref{e6}. Take any
$g\, \in\, H$. Denote
$$
z'\,=\, zg\, ,
$$
and let
$$
v'\, =\, g\cdot v\, \in\, T_{z'}E_H
$$
be the image of $v$ under the automorphism of $TE_H$ given by the action of $g$.
It is straightforward to check that
\begin{equation}\label{e7}
v'(\theta(\varpi))+ [\theta(v'),\, \theta(\varpi)]
\,=\, \text{Ad}(g^{-1})(v(\theta(\varpi))+ [\theta(v),\, \theta(\varpi)])\, ;
\end{equation}
to clarify, $\text{Ad}(g)$ is the automorphism of $\mathfrak g$ corresponding to the
automorphism of $G$ defined by $y\, \longmapsto\, gyg^{-1}$.

{}From the combination of \eqref{e5} and \eqref{e7} it follows that
the construction in \eqref{e6} produces a homomorphism of sheaves
\begin{equation}\label{e8}
\Theta\, :\, \text{At}(E_H)\, \longrightarrow\, \text{ad}(E_G)\otimes\Omega^1_M\, .
\end{equation}
To explain this homomorphism $\Theta$, take $\varpi$ and $v$ as in \eqref{e6}.
Then $\varpi$ gives a section of $\text{At}(E_H)\big\vert_U$ (see \eqref{j1}),
which will be denoted by $\widetilde{\varpi}$; also, we have $df(v)\, \in\, T_{f(z)} M$, where, as
before, $df$ is the differential of the projection $f$. The homomorphism
$\Theta$ in \eqref{e8} is uniquely determined by the condition that the element
$$
\Theta(\widetilde{\varpi})(df(v))\, \in\, \text{ad}(E_G)_{f(z)}
$$
coincides with the image of $(z,\, v(\theta(\varpi))+ [\theta(v),\, \theta(\varpi)])$ in
$\text{ad}(E_G)_{f(z)}$, where $v(\theta(\varpi))+ [\theta(v),\, \theta(\varpi)]
\, \in\, \mathfrak g$ is the element in \eqref{e6}. Recall that
$\text{Ad}(E_G)$ is the quotient of $E_H\times\mathfrak g$ where two elements $(z_1,\, v_1)$
and $(z_2,\, v_2)$ of $E_H\times\mathfrak g$ are identified if there is an element
$g\, \in\, H$ such that $z_2\, =\, z_1g$ and $v_2\,=\, \text{Ad}(g^{-1})(v_1)$, and hence
$(z,\, v(\theta(\varpi))+ [\theta(v),\, \theta(\varpi)])$ gives an element of $\text{ad}(E_G)_{f(z)}$. From
\eqref{e5} and \eqref{e7} it follows that the above characterization of
$\Theta(\widetilde{\varpi})(df(v))\, \in\, \text{ad}(E_G)_{f(z)}$
uniquely defines the homomorphism in \eqref{e8}.

{}From the construction of $\Theta$ in \eqref{e8} it follows immediately that
$\Theta$ is a holomorphic differential operator of order one.

The Cartan geometry $(E_H,\, \theta)$ produces a holomorphic isomorphism
$$
\vartheta\, :\, {\rm At}(E_H)\, \longrightarrow\, {\rm ad}(E_G)
$$
(see \eqref{e-3}). Let
\begin{equation}\label{e9}
\Theta\circ\vartheta^{-1}\, :\, {\rm ad}(E_G)\, \longrightarrow\,\text{ad}(E_G)\otimes\Omega^1_M
\end{equation}
be the composition of $\vartheta^{-1}$ with the homomorphism $\Theta$ in \eqref{e8}.

Recall the holomorphic connection $\theta'$ on $E_G$ in \eqref{e10}. Any holomorphic
connection on $E_G$ induces a holomorphic connection on any holomorphic fiber bundle
associated to $E_G$. In particular, the connection $\theta'$ produces
a holomorphic connection 
\begin{equation}\label{e11}
\Theta'\, :\, {\rm ad}(E_G)\, \longrightarrow\,\text{ad}(E_G)\otimes\Omega^1_M
\end{equation}
on the adjoint bundle ${\rm ad}(E_G)$.

\begin{proposition}\label{prop1}
The holomorphic connection $\Theta'$ in \eqref{e11} coincides with the homomorphism
$\Theta\circ\vartheta^{-1}$ constructed in \eqref{e9}.
\end{proposition}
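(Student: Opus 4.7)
The plan is to verify the equality pointwise by reducing both sides to the same explicit $\mathfrak g$-valued expression on $E_H$. Both $\Theta'$ and $\Theta\circ\vartheta^{-1}$ are first-order holomorphic differential operators from $\text{ad}(E_G)$ to $\text{ad}(E_G)\otimes\Omega^1_M$; it is enough to show that, when a local section $s$ of $\text{ad}(E_G)$ is represented by an $H$-equivariant $\mathfrak g$-valued function on $E_H$, the two sides give the same $\mathfrak g$-valued formula at every point $z\in E_H$ and every tangent vector $v\in T_zE_H$.

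The central step is to identify the principal connection $1$-form $\omega$ on $E_G$ associated to $\theta'$ along the subbundle $E_H\subset E_G$: I claim that $\omega(v)\,=\,\theta(v)$ for every $v\in T_zE_H$, where $\theta(v)\in\mathfrak g$ denotes the second component under \eqref{e0}. To see this, note that the image of $v\in T_zE_H\subset T_zE_G$ in $\text{At}(E_G)_{f(z)}$ coincides with the class of $(v,\,0)\in \text{At}(E_H)\oplus \text{ad}(E_G)$ under the presentation \eqref{b2}. Hence \eqref{e10} gives $\theta'([v])\,=\,\vartheta([v])$, which in $\text{ad}(E_G)\,=\,E_H\times^H\mathfrak g$ is the class of $(z,\,\theta(v))$; this proves the claim.

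With this identification in hand, the rest is a direct comparison. A local section $s$ of $\text{ad}(E_G)$ restricts along $E_H\hookrightarrow E_G$ to an $H$-equivariant holomorphic map $\widetilde s\,:\,E_H\to\mathfrak g$. The standard formula for the covariant derivative induced by a principal connection on the adjoint bundle gives
$$
\widetilde{\Theta'(s)}(df(v)) \,=\, v(\widetilde s)+[\omega(v),\,\widetilde s(z)] \,=\, v(\widetilde s)+[\theta(v),\,\widetilde s(z)]\, .
$$
On the other hand, $\vartheta^{-1}(s)$ is represented by the unique $H$-invariant holomorphic vector field $\varpi$ on $f^{-1}(U)$ satisfying $\theta(\varpi)\,=\,\widetilde s$; substituting into the defining expression \eqref{e6} for $\Theta$ yields
$$
\Theta(\vartheta^{-1}(s))(df(v)) \,=\, v(\theta(\varpi))+[\theta(v),\,\theta(\varpi)(z)]\, ,
$$
which equals the previous line since $\theta(\varpi)\,=\,\widetilde s$.

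The main obstacle is the identification $\omega|_{TE_H}\,=\,\theta$; once this is unwound from the quotient construction \eqref{b2} of $\text{At}(E_G)$ and the formula \eqref{e10} for $\theta'$, the proposition reduces to the observation that the defining formula of $\Theta$ and the classical formula for the connection induced on the adjoint bundle are literally the same expression, with $\varpi$ and $\widetilde s$ being the two incarnations of $\vartheta^{-1}(s)$ under the isomorphism $\vartheta\,:\,\text{At}(E_H)\to\text{ad}(E_G)$.
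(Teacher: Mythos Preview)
Your argument is correct. The identification $\omega|_{TE_H}=\theta$ is the heart of the matter; it is equivalent to the paper's observation that the composite $\text{At}(E_H)\hookrightarrow\text{At}(E_G)\xrightarrow{\theta'}\text{ad}(E_G)$ equals $\vartheta$ (the map \eqref{b3}). Once this is in hand, both sides evaluate at $v\in T_zE_H$ to $v(\widetilde s)+[\theta(v),\widetilde s(z)]$, which is literally the defining expression \eqref{e6}.

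The organization, however, is genuinely different from the paper's. The paper characterizes $\Theta'$ on $E_G$ through the Lie bracket $[\widetilde v,\widetilde s]$ of a horizontal lift with the vertical field representing $s$, shows this bracket corresponds to the $\mathfrak g$-valued function $\widetilde v(\widetilde s_1)$, and then transfers the computation back to $E_H$ by arguing that the vertical projection $[\widetilde v,w]'$ depends only on the vertical component of $w$; for this last step it explicitly invokes flatness of $\theta'$, which is \emph{not} among the hypotheses of the proposition. Your approach sidesteps that step entirely: by using the formula $\nabla_v s=\xi(\widehat s)+[\omega(\xi),\widehat s]$ for an \emph{arbitrary} lift $\xi$ (which follows from $G$-equivariance of $\widehat s$ alone and is curvature-independent), you can work directly on $E_H$ with $\xi=v\in T_zE_H$. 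This is more economical and makes the proposition hold for non-flat Cartan geometries without further comment. The only cost is that you cite the covariant-derivative formula as ``standard'' rather than deriving it, but a one-line justification (write $\xi=\widetilde v+(\xi-\widetilde v)$ and use that the vertical part acts on $\widehat s$ by $-\text{ad}$) would make the argument fully self-contained.
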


\begin{proof}
We first recall the construction of the homomorphism $\Theta'$ in
\eqref{e11}. As in \eqref{e2}, let
$$
q\, :\, E_G \, \longrightarrow\, M
$$
be the holomorphic principal $G$--bundle on $M$ obtained by extending the
structure group of $E_H$ using the inclusion of $H$ in $G$.
Using the action of $G$ on $E_G$, the vector bundle ${\rm kernel}(dq)\, \subset\, TE_G$,
where $dq\, :\, TE_G\, \longrightarrow\, q^* TM$ is the differential of the projection $q$,
is identified with the trivial vector bundle $E_G\times{\mathfrak g}\,
\longrightarrow\, E_G$. Take any
$$
s\, \in\, H^0\left(U,\, {\rm ad}(E_G)\big\vert_U\right),
$$
where $U\, \subset\, M$ is a nonempty open subset. We recall that
$$
\text{ad}(E_G)\,=\, (q_*\text{kernel}(dq))^G\,=\, \text{kernel}(dq)/G
$$
(see \eqref{ead}). Using this isomorphism $$\text{ad}(E_G)\,=\, \text{kernel}(dq)/G
\, \subset\, (TE_G)/G \,=\, \text{At}(E_G),$$ the above section
$s$ of ${\rm ad}(E_G)\big\vert_U$ produces a holomorphic vector field
\begin{equation}\label{ts}
\widetilde{s}\, \in\, H^0\left(q^{-1}(U),\, (TE_G)\big\vert_{q^{-1}(U)}\right)\, .
\end{equation}
We note that $\widetilde{s}$ satisfies the following two conditions:
\begin{itemize}
\item $\widetilde{s}$ lies in the subspace
$$
H^0\left(q^{-1}(U),\, {\rm kernel}(dq)\big\vert_{q^{-1}(U)}\right)\,
\subset\, H^0\left(q^{-1}(U),\, (TE_G)\big\vert_{q^{-1}(U)}\right)\, ,
$$
where $dq$ as before is the differential of the projection $q$, and

\item the action of $G$ on $E_G$ preserves $\widetilde{s}$.
\end{itemize}
Take a holomorphic vector field
$$
v\, \in\, H^0(U, \, TU)\, .
$$
Let
$$
\widetilde{v}\, \in\, H^0\left(q^{-1}(U),\, (TE_G)\big\vert_{q^{-1}(U)}\right)
$$
be the horizontal lift of $v$ for the holomorphic connection $\theta'$ on $E_G$
in \eqref{e10}. Now consider the Lie bracket
$$
[\widetilde{v},\, \widetilde{s}]\, \in\, H^0\left(q^{-1}(U),\,
(TE_G)\big\vert_{q^{-1}(U)}\right)\, .
$$
It is straightforward to verify the following statements:
\begin{itemize}
\item The vector field $[\widetilde{v},\, \widetilde{s}]$ is $G$--invariant.
Indeed, this is a consequence of the fact that both $\widetilde{v}$ and $\widetilde{s}$
are $G$--invariant vector fields.

\item We have $$[\widetilde{v},\, \widetilde{s}]\, \in\, H^0\left(q^{-1}(U),\,
{\rm kernel}(dq)\big\vert_{q^{-1}(U)}\right)\, .$$ Indeed, this follows from the facts
that $\widetilde{s}\, \in\, H^0\left(q^{-1}(U),\,
{\rm kernel}(dq)\big\vert_{q^{-1}(U)}\right)$ and $\widetilde{v}$ is $G$--invariant.

\item The equality $$[\widetilde{\beta\cdot v},\, \widetilde{s}]\,=\, (\beta\circ q)
\cdot [\widetilde{v},\, \widetilde{s}]$$ holds for any holomorphic function $\beta$ on $U$. To
see this, note that $\widetilde{s}(\beta\circ q)\,=\, 0$ because
$$\widetilde{s}\, \in\, H^0\left(q^{-1}(U),\,
{\rm kernel}(dq)\big\vert_{q^{-1}(U)}\right).$$
The above equality follows immediately from this.

\item $[\widetilde{v},\, \widetilde{\beta\cdot s}]\,=\, (\beta\circ q)
\cdot [\widetilde{v},\, \widetilde{s}]+ (v(\beta)\circ q)\cdot \widetilde{s}$
for any holomorphic function $\beta$ on $U$.
\end{itemize}
Note that the first two of the above four statements together imply that $[\widetilde{v},\, \widetilde{s}]$ gives
a holomorphic section of ${\rm ad}(E_G)\big\vert_U$; this
holomorphic section of ${\rm ad}(E_G)\big\vert_U$ will be denoted by
$\widehat{[\widetilde{v},\, \widetilde{s}]}$.

The homomorphism $\Theta'$ in \eqref{e11} is uniquely determined by the following equation:
\begin{equation}\label{dc}
\langle \Theta'(s),\, v\rangle\,=\, \widehat{[\widetilde{v},\, \widetilde{s}]}\, ,
\end{equation}
where $\langle -, \, -\rangle$ denotes the contraction of forms by vector fields; note that
both sides of \eqref{dc} are sections of ${\rm ad}(E_G)\big\vert_U$. The third and fourth
statements above imply that $\Theta'$ defined by \eqref{dc} is actually a connection on $\text{ad}(E_G)$.

Using the earlier mentioned identification between ${\rm kernel}(dq)$
and the trivial vector bundle $E_G\times{\mathfrak g}\,
\longrightarrow\, E_G$, the vertical vector field $\widetilde{s}$ in \eqref{ts}
(vertical for the projection $q$) defines a
$\mathfrak g$--valued holomorphic function on $q^{-1}(U)$. This
$\mathfrak g$--valued holomorphic function on $q^{-1}(U)$ will be denoted by
$\widetilde{s}_1$. Consider the Lie bracket of vector fields
$$
[\widetilde{v},\, \widetilde{s}]\, \in\, H^0\left(q^{-1}(U),\,
{\rm kernel}(dq)\big\vert_{q^{-1}(U)}\right);
$$
it defines a $\mathfrak g$--valued holomorphic function on $q^{-1}(U)$.
This $\mathfrak g$--valued holomorphic function on $q^{-1}(U)$
coincides with
the derivative $\widetilde{v}(\widetilde{s}_1)$
of the $\mathfrak g$--valued function $\widetilde{s}_1$ in the direction of the vector
field $\widetilde{v}$.

Since $E_G$ is the principal $G$--bundle obtained by extending the structure group of
the principal $H$--bundle $E_H$ using the inclusion map $H\, \hookrightarrow\, G$, we have
an inclusion map
\begin{equation}\label{b1}
\text{At}(E_H)\, \hookrightarrow\, \text{At}(E_G)\, ;
\end{equation}
this inclusion map sends any $v\, \in\, \text{At}(E_H)$ to
$(v,\, 0)\, \in\, ({\rm At}(E_H)\oplus {\rm ad}(E_G))/\text{ad}(E_H)$; see \eqref{b2}.
The connection $\theta'$ on $E_G$ in \eqref{e10} produces a holomorphic splitting
$$
\theta'\, :\, \text{At}(E_G)\, \longrightarrow\, \text{ad}(E_G)
$$
(see \eqref{e10}). Combining this with the homomorphism in \eqref{b1}, we get a homomorphism
\begin{equation}\label{b3}
\text{At}(E_H)\, \longrightarrow\, \text{ad}(E_G)\, .
\end{equation}
{}From the construction of the connection $\theta'$ in \eqref{e10} it follows immediately that
this homomorphism in \eqref{b3} coincides with the homomorphism $\vartheta$ in \eqref{e-3}.

Let $w\, \in\, H^0(q^{-1}(U),\, TE_G\big\vert_{q^{-1}(U)})^G$ 
be a holomorphic vector field on $q^{-1}(U)\, \subset\, E_G$. As before, take a holomorphic vector
field $v\, \in\, H^0(U, \, TU)$, and let
$$
\widetilde{v}\, \in\, H^0\left(q^{-1}(U),\, (TE_G)\big\vert_{q^{-1}(U)}\right)
$$
be the horizontal lift of $v$ for the holomorphic connection $\theta'$ on $E_G$
in \eqref{e10}. Let
$$
[\widetilde{v},\, w]'\, \in\,
H^0\left(q^{-1}(U),\, {\rm kernel}(dq)\big\vert_{q^{-1}(U)}\right)\,
\subset\, H^0\left(q^{-1}(U),\, (TE_G)\big\vert_{q^{-1}(U)}\right)
$$
be the projection of the Lie bracket $[\widetilde{v},\, w]$ to the vertical component
for the connection $\theta'$ in \eqref{e10}. If $w$ is horizontal for the connection
$\theta'$, then we have $[\widetilde{v},\, w]'\,=\, 0$, because the connection $\theta'$
is flat which means that the horizontal distribution for $\theta'$ is integrable. Therefore,
for any $$w_1,\, w_2\, \in\, H^0(q^{-1}(U),\, TE_G\big\vert_{q^{-1}(U)})^G,$$ 
we have
$$
[\widetilde{v},\, w_1]'\, =\,[\widetilde{v},\, w_2]'
$$
if the vertical components of $w_1$ and $w_2$ coincide.
We noted above that the homomorphism in \eqref{b3} coincides with the homomorphism $\vartheta$ in \eqref{e-3}.
Now the proposition follows by comparing the constructions of $\Theta'$ and $\Theta$.
\end{proof}

\section{Deformations of Cartan geometry}\label{se4}

Let $S$ be a complex space. A \textit{holomorphic family} of Cartan geometries of type
$G/H$ parametrized by $S$ consists of the following:
\begin{enumerate}
\item $\phi\, :\, M_S\, \longrightarrow\, S$ is a holomorphic family of compact complex
manifolds parametrized by $S$.

\item $F\, :\, {\mathcal E}_H\, \longrightarrow\, M_S$ is a holomorphic principal 
$H$--bundle. The relative holomorphic tangent bundle for the projection $\phi\circ F$ will 
be denoted by $\mathcal T$. So $\mathcal T$ is the subbundle of $T{\mathcal E}_H$ given by 
the kernel of the differential $d(\phi\circ F)$ of the map $\phi\circ F$.

\item $\theta_S\, :\, {\mathcal T}\, \longrightarrow\, {\mathcal E}_H\times {\mathfrak g}$ is a
holomorphic isomorphism of ${\mathcal T}$ with the trivial holomorphic bundle
${\mathcal E}_H\times {\mathfrak g}\, \longrightarrow\, {\mathcal E}_H$ satisfying the
following two conditions:
\begin{itemize}
\item $\theta_S$ is $H$--equivariant for the action of $H$ on $T{\mathcal E}_H$ given by the
action of $H$ on ${\mathcal E}_H$ and the diagonal action of $H$ on ${\mathcal E}_H\times
{\mathfrak g}$ constructed using the adjoint action of $H$ on ${\mathfrak g}$ and the
action of $H$ on ${\mathcal E}_H$, and

\item the restriction of $\theta_S$ to $\text{kernel}(dF)$, where $dF$ is the differential
of the projection $F$, coincides with the
identification of $\text{kernel}(dF)$ with ${\mathcal E}_H\times \mathfrak h$ given by the
action of $H$ on ${\mathcal E}_H$.
\end{itemize}
\end{enumerate}

It should be clarified that the above definition is more general than
\cite[p.~516, Definition 2.2]{BDS}. In \cite[Definition 2.2]{BDS} the family
of complex manifolds $\phi\, :\, M_S\, \longrightarrow\, S$ is taken to be a
constant family of the form $M\times S\, \longrightarrow\, S$.

Let $(E_H,\, \theta)$ be a holomorphic Cartan geometry of type $G/H$ on a compact
complex manifold $M$. Let $S$ be a complex space with a distinguished point
$s_0\, \in\, S$. A \textit{deformation} of $(M,\, E_H,\, \theta)$ parametrized
by $S$ is a holomorphic family of Cartan geometries
$(M_S,\, \phi,\, {\mathcal E}_H,\, F,\, \theta_S)$ of type $G/H$ parametrized
by $S$ (see above) together with
\begin{itemize}
\item a holomorphic isomorphism of $M$ with $M_{s_0}\, :=\, \phi^{-1}(s_0)$, and

\item a holomorphic isomorphism of principal $H$--bundles $$E_H\,
\stackrel{\sim}{\longrightarrow}\, {\mathcal E}_H\big\vert_{M_{s_0}}$$ that takes
$\theta$ to the restriction of $\theta_S$ to $(\phi\circ F)^{-1}(s_0)$.
\end{itemize}

An \textit{isomorphism} between two deformations $(M_S,\, \phi,\, {\mathcal E}_H,\, F,\, 
\theta_S)$ and $(M'_S,\, \phi',\, {\mathcal E}'_H,\, F',\, \theta'_S)$ of
$(M,\, E_H,\, \theta)$ consists of a holomorphic isomorphism 
$M_S\, \stackrel{\sim}{\longrightarrow}\, M'_S$ parametrized by $S$ together with
a holomorphic isomorphism of principal $H$--bundles
$$
\delta\, :\, {\mathcal E}_H\, \stackrel{\sim}{\longrightarrow}\,{\mathcal E}'_H
$$
satisfying the following conditions:
\begin{itemize}
\item the isomorphism ${\mathcal T}\, \stackrel{\sim}{\longrightarrow}\, {\mathcal T}'$ 
given by $\delta$, where ${\mathcal T}'$ relative holomorphic tangent bundle
for the projection $\phi'\circ F'$, takes $\theta_S$ to $\theta'_S$,

\item the diagram
$$
\begin{matrix}
M& \stackrel{\rm Id}{\longrightarrow} & M\\
\Big\downarrow &&\Big\downarrow \\
M_S& \longrightarrow & M'_S
\end{matrix}
$$
commutes, and

\item the diagram
$$
\begin{matrix}
E_H& \stackrel{\rm Id}{\longrightarrow} & E_H\\
\Big\downarrow && \Big\downarrow \\
{\mathcal E}_H& \stackrel{\delta}{\longrightarrow} &{\mathcal E}'_H
\end{matrix}
$$
commutes.
\end{itemize}

When the parameter space $S$ is the nonreduced space ${\rm Spec}\, \mathbb{C}[t]/t^2$, then the deformation of
$(M,\, E_H,\, \theta)$ is called an \textit{infinitesimal deformation}.

Let $(E_H,\, \theta)$ be a holomorphic Cartan geometry of type $G/H$ on a compact
complex manifold $M$. Consider the homomorphism $\Theta$ in \eqref{e8}. Let
${\mathcal C}_\bullet$ denote the two-term complex of sheaves on $M$
$$
{\mathcal C}_\bullet\, \, :\,\, {\mathcal C}_0\,:=\, \text{At}(E_H)\,
\stackrel{\Theta}{\longrightarrow}\, {\mathcal C}_1\,:=\,\text{ad}(E_G)\otimes\Omega^1_M\, ,
$$
where ${\mathcal C}_i$ is at the $i$-th position. Using the inclusion map $h_1$
in \eqref{e-1} we get the following short exact sequence of complexes of sheaves on $M$
\begin{equation}\label{ec}
\begin{matrix}
&& 0 && 0\\
&& \Big\downarrow && \Big\downarrow\\
{\mathcal C}'_\bullet & : & {\mathcal C}'_0\,:=\, \text{ad}(E_H) &
\stackrel{\Theta}{\longrightarrow} & {\mathcal C}'_1\,:=\,\text{ad}(E_G)\otimes\Omega^1_M\\
&& \,\,\, \Big\downarrow h_1 && \,\,\,\,\,\, \Big\downarrow {\rm Id}\\
{\mathcal C}_\bullet & : & {\mathcal C}_0\,=\, \text{At}(E_H) &
\stackrel{\Theta}{\longrightarrow} & {\mathcal C}_1\,=\,\text{ad}(E_G)\otimes\Omega^1_M\\
&& \,\,\,\, \Big\downarrow df && \Big\downarrow\\
&& TM & \longrightarrow & 0\\
&& \Big\downarrow && \Big\downarrow\\
&& 0 && 0
\end{matrix}
\end{equation}
(the restriction of $\Theta$ to $\text{ad}(E_H)\, \subset\, \text{At}(E_H)$ is also denoted
by $\Theta$). From the short exact sequence of complexes in \eqref{ec} we get the following long
exact sequence of hypercohomologies
\begin{equation}\label{ec2}
\longrightarrow\, {\mathbb H}^1({\mathcal C}'_\bullet)\,\stackrel{\gamma_1}{\longrightarrow}\,
{\mathbb H}^1({\mathcal C}_\bullet)\,\stackrel{\gamma_2}{\longrightarrow}\,
H^1(M,\, TM)\, \longrightarrow \cdots .
\end{equation}

\begin{proposition}\label{prop2}\mbox{}
\begin{enumerate}
\item The space of all infinitesimal deformations of the holomorphic Cartan geometry
$(M,\, E_H,\, \theta)$ are parametrized by the first hypercohomology
${\mathbb H}^1({\mathcal C}_\bullet)$.

\item The space of infinitesimal deformations of the holomorphic Cartan geometry
$(E_H,\, \theta)$, keeping the complex manifold $M$ fixed, are parametrized by the
first hypercohomology ${\mathbb H}^1({\mathcal C}'_\bullet)$.

\item The homomorphism $\gamma_2$ in \eqref{ec2} is the natural map that sends
an infinitesimal deformation of $(M,\, E_H,\, \theta)$ to the infinitesimal deformation of
$M$ obtained from it by simply forgetting $E_H$ and $\theta$. In other words, $\gamma_2$ gives
the infinitesimal deformation of the underlying compact complex manifold
when the Cartan geometry $(M,\, E_H,\, \theta)$ deforms.

\item The homomorphism $\gamma_1$ in \eqref{ec2} is the natural map that sends
an infinitesimal deformation of $(E_H,\, \theta)$ to the
infinitesimal deformation of $(M,\, E_H,\, \theta)$ obtained from it by keeping
the complex manifold $M$ unchanged.
\end{enumerate}
\end{proposition}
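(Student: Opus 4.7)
The plan is to identify both $\mathbb{H}^1(\mathcal{C}_\bullet)$ and $\mathbb{H}^1(\mathcal{C}'_\bullet)$ with Čech hypercohomology for a suitable Leray cover $\{U_i\}_{i \in I}$ of $M$, chosen so that each $U_i$ is Stein and $E_H\vert_{U_i}$ is trivializable (and likewise for all intersections). A class in $\mathbb{H}^1(\mathcal{C}_\bullet)$ is then represented by a pair $(\{\xi_{ij}\}, \{\eta_i\})$ with $\xi_{ij} \in \Gamma(U_i \cap U_j,\,\text{At}(E_H))$ and $\eta_i \in \Gamma(U_i,\,\text{ad}(E_G) \otimes \Omega^1_M)$, subject to the Čech cocycle condition $\xi_{jk} - \xi_{ik} + \xi_{ij} = 0$ and the compatibility $\Theta(\xi_{ij}) = \eta_j - \eta_i$, modulo the natural coboundaries. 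The proof then matches such hypercocycles with isomorphism classes of first-order deformations.

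For part (2), which is the corrected version of Theorem 3.4 of \cite{BDS}, I would recall that infinitesimal deformations of the principal $H$-bundle $E_H$ with $M$ fixed are classified by $H^1(M,\,\text{ad}(E_H))$, represented by Čech $1$-cocycles $\xi_{ij} \in \Gamma(U_i \cap U_j,\,\text{ad}(E_H))$ which encode the infinitesimal gluings of the trivial first-order extensions $E_H\vert_{U_i} \times S$. A compatible first-order deformation of $\theta$ on each trivialization is given by $\theta_S\vert_{U_i} = \theta + t\eta_i$ with $\eta_i \in \Gamma(U_i,\,\text{ad}(E_G) \otimes \Omega^1_M)$, and the condition that these glue coherently under the infinitesimal automorphisms $\exp(t\xi_{ij})$ is exactly $\Theta(\xi_{ij}) = \eta_j - \eta_i$, because $\Theta$ was defined in \eqref{e8} precisely as the infinitesimal change of $\theta$ under an $H$-invariant vector field on $E_H$. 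Coboundary relations match changes of trivialization, giving both surjectivity and injectivity of the parametrization.

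Part (1) proceeds analogously, with $\text{ad}(E_H)$ replaced by $\text{At}(E_H)$: when $M$ itself deforms, the Čech gluings of $\mathcal{E}_H$ on overlaps are arbitrary $H$-invariant holomorphic vector fields on $E_H\vert_{f^{-1}(U_i \cap U_j)}$, i.e.\ sections of $\text{At}(E_H)$, and the image $df(\xi_{ij}) \in \Gamma(U_i \cap U_j,\, TM)$ is the Kodaira--Spencer cocycle for the induced deformation $M_S$ of $M$. Parts (3) and (4) then follow immediately from the construction: $\gamma_2$ is induced at the cocycle level by $df \colon \text{At}(E_H) \to TM$, so it sends a class in $\mathbb{H}^1(\mathcal{C}_\bullet)$ to the Kodaira--Spencer class of $M_S$ in $H^1(M,\,TM)$; and $\gamma_1$ is induced by $h_1 \colon \text{ad}(E_H) \hookrightarrow \text{At}(E_H)$, which sends a class in $\mathbb{H}^1(\mathcal{C}'_\bullet)$ to the same hypercocycle viewed in $\mathbb{H}^1(\mathcal{C}_\bullet)$, corresponding to a deformation of $(M,\,E_H,\,\theta)$ whose Kodaira--Spencer class $df(\xi_{ij})$ vanishes, i.e.\ with $M$ unchanged.

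The main technical obstacle is the verification, in part (1), that the infinitesimal transformation of $\theta_S$ under an $H$-invariant gluing vector field $\xi \in \text{At}(E_H)$ on overlaps is literally the operator $\Theta$ of \eqref{e8}. Concretely, this requires writing $\theta_S$ on $U_i \cap U_j$ relative to the two adjacent trivializations, expanding to first order in $t$ using the transformation $\exp(t\xi_{ij})$, and recognizing the leading term as the Lie-derivative-type expression $v(\theta(\varpi)) + [\theta(v),\,\theta(\varpi)]$ from \eqref{e6} that defines $\Theta$. The $H$-equivariance and vertical-vanishing computations \eqref{e5}--\eqref{e7}, together with Proposition \ref{prop1} which identifies $\Theta \circ \vartheta^{-1}$ with the connection $\Theta'$ induced by $\theta'$ on $\text{ad}(E_G)$, carry out exactly this identification, so once the cover and trivializations are fixed the remainder reduces to a careful bookkeeping of cocycle and coboundary relations.
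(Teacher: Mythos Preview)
Your argument is correct; the paper itself gives no details, simply citing \cite{BDS} and \cite{Ch} and calling the result a ``straightforward consequence,'' and your \v{C}ech-hypercohomology unpacking is the standard route those references follow. One small point: Proposition~\ref{prop1} is not actually needed here --- it serves later to pass from $\mathcal{C}_\bullet$ to $\mathcal{B}_\bullet$ in Corollary~\ref{cor1} --- whereas the identification you require in your last paragraph (that $\Theta$ records the first-order change of $\theta$ under the flow of an $H$-invariant vector field) is already the content of the construction \eqref{e6}--\eqref{e8}.
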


\begin{proof}
This is a straightforward consequence of the computations in \cite{BDS} and \cite{Ch}.
\end{proof}

Consider the isomorphism $\vartheta\, :\, {\rm At}(E_H)\, \longrightarrow\, {\rm ad}(E_G)$
given by $(E_H,\, \theta)$ (see \eqref{e-3}). Using Proposition \ref{prop1}
and \eqref{e12}, the diagram in \eqref{ec} is transformed to the diagram
\begin{equation}\label{e16}
\begin{matrix}
&& 0 && 0\\
&& \Big\downarrow && \Big\downarrow\\
{\mathcal B}'_\bullet & : & {\mathcal B}'_0\,:=\, \text{ad}(E_H) &
\stackrel{\Theta'}{\longrightarrow} & {\mathcal B}'_1\,:=\,\text{ad}(E_G)\otimes\Omega^1_M\\
&& \Big\downarrow && \Big\downarrow\\
{\mathcal B}_\bullet & : & {\mathcal B}_0\,:=\, \text{ad}(E_G) &
\stackrel{\Theta'}{\longrightarrow} & {\mathcal B}_1\,:=\,\text{ad}(E_G)\otimes\Omega^1_M\\
&& \Big\downarrow && \Big\downarrow\\
&& TM\,=\, \text{ad}(E_G)/\text{ad}(E_H) & \longrightarrow & 0\\
&& \Big\downarrow && \Big\downarrow\\
&& 0 && 0
\end{matrix} 
\end{equation}
where $\Theta'$ is the homomorphism in \eqref{e11}; the restriction of
$\Theta'$ to $\text{ad}(E_H)\, \subset\, \text{ad}(E_G)$ is also denoted by
$\Theta'$. Let
\begin{equation}\label{e14}
\longrightarrow\, {\mathbb H}^1({\mathcal B}'_\bullet)\,\stackrel{\alpha_1}{\longrightarrow}\,
{\mathbb H}^1({\mathcal B}_\bullet)\,\stackrel{\alpha_2}{\longrightarrow}\,
H^1(M,\, TM)\,=\, H^1(M,\, \text{ad}(E_G)/\text{ad}(E_H))\, \longrightarrow \cdots
\end{equation}
be the corresponding long exact sequence of hypercohomologies.

The following is a reformulation of Proposition \ref{prop2}.

\begin{corollary}\label{cor1}\mbox{}
\begin{enumerate}
\item The space of all infinitesimal deformations of the holomorphic Cartan geometry
$(M,\, E_H,\, \theta)$ is parametrized by the first hypercohomology
${\mathbb H}^1({\mathcal B}_\bullet)$.

\item The space of infinitesimal deformations of the holomorphic Cartan geometry
$(E_H,\, \theta)$, keeping the complex manifold $M$ fixed, is parametrized by the first
hypercohomology ${\mathbb H}^1({\mathcal B}'_\bullet)$.

\item The homomorphism $\alpha_2$ in \eqref{e14} is the natural map that sends
an infinitesimal deformation of $(M,\, E_H,\, \theta)$ to the infinitesimal deformation of
$M$ obtained from it by simply forgetting $E_H$ and $\theta$.

\item The homomorphism $\alpha_1$ in \eqref{e14} is the natural map that sends
an infinitesimal deformation of $(E_H,\, \theta)$ to the
infinitesimal deformation of $(M,\, E_H,\, \theta)$ obtained from it by keeping
the complex manifold $M$ unchanged.
\end{enumerate}
\end{corollary}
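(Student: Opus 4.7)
The plan is to deduce the corollary directly from Proposition \ref{prop2} by exhibiting an isomorphism between the short exact sequences of two-term complexes \eqref{ec} and \eqref{e16}. The key ingredient is that the data defining a Cartan geometry, namely the isomorphism $\vartheta\colon\mathrm{At}(E_H)\to\mathrm{ad}(E_G)$ from \eqref{e-3}, interchanges the two complexes: on degree $0$ I would use $\vartheta$ itself, and on degree $1$ the identity of $\mathrm{ad}(E_G)\otimes\Omega^1_M$. Commutativity of the resulting square is precisely the content of Proposition \ref{prop1}, which says $\Theta=\Theta'\circ\vartheta$, so this yields an isomorphism of complexes $\mathcal{C}_\bullet\stackrel{\sim}{\longrightarrow}\mathcal{B}_\bullet$.

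Next I would extend this to the full short exact sequence of complexes. On the subcomplex the map is the identity on $\mathrm{ad}(E_H)$ in degree $0$ and in degree $1$, and compatibility with the inclusions $h_1\colon\mathrm{ad}(E_H)\hookrightarrow\mathrm{At}(E_H)$ and $\iota_1\colon\mathrm{ad}(E_H)\hookrightarrow\mathrm{ad}(E_G)$ is the relation $\vartheta\circ h_1=h_2$ from \eqref{e-2}, i.e.\ the left half of diagram \eqref{e12}. On the quotient complex, $\vartheta$ descends to the canonical isomorphism $TM\stackrel{\sim}{\longrightarrow}\mathrm{ad}(E_G)/\mathrm{ad}(E_H)$ that already figures on the right of \eqref{e12}. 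Thus I obtain a commutative diagram of short exact sequences of complexes in which every vertical arrow is an isomorphism.

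Applying hypercohomology, the long exact sequence \eqref{ec2} is then identified term-by-term with \eqref{e14}; in particular $\gamma_i$ is carried to $\alpha_i$ under the induced isomorphisms on $\mathbb{H}^1$ (and the identification $H^1(M,TM)=H^1(M,\mathrm{ad}(E_G)/\mathrm{ad}(E_H))$). Parts (1)-(4) of the corollary then follow immediately from the corresponding parts of Proposition \ref{prop2} via these identifications, since the morphism of complexes is canonical and the forgetful maps on deformation spaces are natural.

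The only real point to verify carefully is the last one: that the abstract isomorphism $\mathbb{H}^1(\mathcal{C}_\bullet)\cong\mathbb{H}^1(\mathcal{B}_\bullet)$ constructed above actually intertwines the geometric interpretation of a class as an infinitesimal deformation of $(M,E_H,\theta)$. This is where I expect the main (but mild) obstacle to lie. It is however automatic once one notes that the change of model $\mathrm{At}(E_H)\rightsquigarrow\mathrm{ad}(E_G)$ is precisely the datum of $\vartheta$ itself, so the \v Cech cocycles representing a deformation in either presentation differ only by the isomorphism $\vartheta$ that is part of the Cartan geometry; functoriality of hypercohomology then makes the identifications match the forgetful maps in statements (3) and (4).
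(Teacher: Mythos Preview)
Your proposal is correct and follows essentially the same approach as the paper: the paper presents Corollary~\ref{cor1} simply as ``a reformulation of Proposition~\ref{prop2}'' obtained by using $\vartheta$ together with Proposition~\ref{prop1} and diagram~\eqref{e12} to transform the diagram~\eqref{ec} into the diagram~\eqref{e16}. Your write-up spells out in detail exactly this transformation (the isomorphism of short exact sequences of complexes via $\vartheta$ in degree~$0$ and the identity in degree~$1$) and the resulting identification of the long exact sequences \eqref{ec2} and \eqref{e14}, which is all the paper is implicitly invoking.
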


\section{Flat Cartan geometry}

Let $(M_S,\, \phi,\, {\mathcal E}_H,\, F,\, \theta_S)$ be a holomorphic family of Cartan 
geometries of type $G/H$ parametrized by $S$; see Section \ref{se4}. Let ${\mathcal E}_G\, 
\longrightarrow\, M_S$ be the holomorphic principal $G$--bundle obtained by extending the 
structure group of ${\mathcal E}_H$ using the inclusion map of $H$ in $G$. Recall the 
construction of the holomorphic connection $\theta'$ in \eqref{e10}, given any holomorphic 
Cartan geometry of type $G/H$. This construction produces a relative holomorphic 
connection on the holomorphic principal $G$--bundle ${\mathcal E}_G$, relative for the 
projection ${\mathcal E}_G\,\longrightarrow\, S$. This relative
holomorphic connection on ${\mathcal E}_G$ will be denoted by $\theta'_S$.

Let $(E_H,\, \theta)$ be a flat holomorphic Cartan geometry of type $G/H$ on a compact 
complex manifold $M$. Let $S$ be a complex space. A \textit{holomorphic family of flat 
Cartan geometries} of type $G/H$ parametrized by $S$ is a holomorphic family of Cartan geometries 
$(M_S,\, \phi,\, {\mathcal E}_H,\, F,\, \theta_S)$ of type $G/H$ parametrized by $S$ such that
the relative holomorphic connection $\theta'_S$ on ${\mathcal E}_G$ is flat.

Associating to any flat holomorphic Cartan geometry $(E_H,\, \theta)$ of type $G/H$ on $M$,
we have a flat principal $G$--bundle $(E_G,\, \theta')$ on the topological
manifold $M$, where $\theta'$
is constructed in \eqref{e10}. Let
\begin{equation}\label{icg}
{\mathcal I}_{CG}
\end{equation}
denote the space of all
infinitesimal deformations of the flat holomorphic Cartan geometry $(M,\, E_H,\, \theta)$
in the category of the flat holomorphic Cartan geometries.
Let
\begin{equation}\label{ifc}
{\mathcal I}_{FC}
\end{equation}
denote the space of all
infinitesimal deformations of the flat principal $G$--bundle $(E_G,\, \theta')$ on
the topological manifold $M$. The above association of a flat $G$--bundle $(E_G,\, \theta')$
to a flat holomorphic Cartan geometry $(E_H,\, \theta)$ produces a homomorphism
\begin{equation}\label{e13}
\varphi\, :\, {\mathcal I}_{CG}\, \longrightarrow\, {\mathcal I}_{FC}
\end{equation}
(see \eqref{icg} and \eqref{ifc}).

\begin{theorem}\label{thm1}
The homomorphism $\varphi$ in \eqref{e13} is an isomorphism.
\end{theorem}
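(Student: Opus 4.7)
The plan is to realize both $\mathcal{I}_{CG}$ and $\mathcal{I}_{FC}$ as $H^1(M,L)$, where $L$ denotes the locally constant sheaf on $M$ of $\Theta'$-flat sections of $\text{ad}(E_G)$, and then verify that $\varphi$ becomes the identity on $H^1(M,L)$ under these identifications.

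First I would use the flatness of $\theta'$ to extend $\mathcal{B}_\bullet$ to the three-term complex
$$
\mathcal{F}_\bullet\,:\, \text{ad}(E_G)\,\stackrel{\Theta'}{\longrightarrow}\, \text{ad}(E_G)\otimes\Omega^1_M\,\stackrel{\Theta'}{\longrightarrow}\, \text{ad}(E_G)\otimes\Omega^2_M,
$$
which is indeed a complex because flatness gives $\Theta'\circ\Theta' = 0$. By Corollary \ref{cor1}, a (not necessarily flat) infinitesimal Cartan deformation is represented by a hypercohomology $1$-cocycle $(\omega_i,\,s_{ij})$ for $\mathcal{B}_\bullet$ with $\omega_j - \omega_i = \Theta'(s_{ij})$, and the first-order curvature of the deformed connection is the global section of $\text{ad}(E_G)\otimes\Omega^2_M$ whose local expression is $\Theta'(\omega_i)$, well-defined globally because $\Theta'(\omega_j) - \Theta'(\omega_i) = \Theta'\circ\Theta'(s_{ij}) = 0$. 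The short exact sequence of complexes
$$
0 \longrightarrow (\text{ad}(E_G)\otimes\Omega^2_M)[-2] \longrightarrow \mathcal{F}_\bullet \longrightarrow \mathcal{B}_\bullet \longrightarrow 0
$$
realizes this curvature map as the connecting homomorphism $\mathbb{H}^1(\mathcal{B}_\bullet) \to H^0(M,\,\text{ad}(E_G)\otimes\Omega^2_M)$; since the hypercohomology of the kernel vanishes in degrees less than $2$, one concludes that $\mathbb{H}^1(\mathcal{F}_\bullet)$ injects into $\mathbb{H}^1(\mathcal{B}_\bullet)$ with image equal to the kernel of this curvature map, which is exactly $\mathcal{I}_{CG}$.

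Next I would invoke the holomorphic Poincar\'e lemma for the flat holomorphic connection $\Theta'$: the de Rham complex $(\text{ad}(E_G)\otimes\Omega^\bullet_M,\,\Theta')$ is a resolution of $L$, so $\mathcal{H}^0(\mathcal{F}_\bullet) = L$ and $\mathcal{H}^1(\mathcal{F}_\bullet) = 0$. The hypercohomology spectral sequence $E_2^{p,q} = H^p(M,\,\mathcal{H}^q(\mathcal{F}_\bullet))$ then has only $E_2^{1,0} = H^1(M,L)$ contributing to $\mathbb{H}^1$, yielding $\mathbb{H}^1(\mathcal{F}_\bullet) \cong H^1(M,L)$. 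On the other side, flat principal $G$-bundles on the topological manifold $M$ are classified up to isomorphism by $\text{Hom}(\pi_1(M),G)/G$, and $\mathcal{I}_{FC}$ is the Zariski tangent space at the monodromy representation $\rho$, which is $H^1(\pi_1(M),\,\mathfrak{g}_{\text{Ad}\,\rho}) \cong H^1(M,L)$ via the standard identification of group cohomology with local system cohomology.

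It remains to check that $\varphi$ corresponds to the identity on $H^1(M,L)$ under these identifications. The inclusion $L[0]\hookrightarrow\mathcal{F}_\bullet$ as the kernel of $\Theta'$ in degree zero induces the above spectral-sequence isomorphism $H^1(M,L) \stackrel{\sim}{\longrightarrow}\mathbb{H}^1(\mathcal{F}_\bullet)$. Starting from a $1$-cocycle $(\omega_i,\,s_{ij})$ representing a flat Cartan deformation, one applies the Poincar\'e lemma on a cover by neighborhoods admitting a $\Theta'$-flat frame to write $\omega_i = \Theta'(t_i)$ locally, and then replaces the cocycle by the cohomologous $L$-valued $1$-cocycle $s_{ij} - (t_j - t_i)$; this latter cocycle is precisely the $1$-cocycle of $\pi_1(M)$ with values in $\mathfrak{g}_{\text{Ad}\,\rho}$ describing the infinitesimal deformation of $\rho$ produced by $\varphi$. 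The main obstacle is this final compatibility check, which requires carefully tracing $\varphi$ through the \v{C}ech-de Rham dictionary relating the two descriptions; the earlier steps are essentially formal consequences of Corollary \ref{cor1} and the holomorphic Poincar\'e lemma.
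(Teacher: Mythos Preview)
Your proposal is correct and follows essentially the same route as the paper: extend $\mathcal{B}_\bullet$ to the three-term de Rham complex of the flat connection $\Theta'$ (your $\mathcal{F}_\bullet$ is the paper's $\widetilde{\mathcal{B}}_\bullet$), use the short exact sequence with $(\text{ad}(E_G)\otimes\Omega^2_M)[-2]$ to identify $\mathcal{I}_{CG}$ with $\mathbb{H}^1$ of this complex, and then use the quasi-isomorphism $L\hookrightarrow\mathcal{F}_\bullet$ (the holomorphic Poincar\'e lemma) together with the Goldman--Millson identification $\mathcal{I}_{FC}\cong H^1(M,L)$. Your final paragraph, tracing the \v{C}ech--de~Rham dictionary to check that $\varphi$ really corresponds to the identity on $H^1(M,L)$, is a point the paper treats more elliptically; including it makes the argument more complete but does not change the strategy.
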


\begin{proof}
As before, $(E_H,\, \theta)$ is a flat holomorphic Cartan geometry of type $G/H$ on a
compact complex manifold $M$. Consider the holomorphic connection $\Theta'$ on
$\text{ad}(E_G)$ in \eqref{e11} given by $(E_H,\, \theta)$. We note that $\Theta'$ is
flat because the holomorphic connection $\theta'$ on $E_G$ in \eqref{e10} is flat.
Since $\Theta'$ is flat, it produces the following complex
$\widetilde{\mathcal B}_\bullet$ of sheaves on $M$:
\begin{equation}\label{e15}
\widetilde{\mathcal B}_\bullet\, \, :\,\, \widetilde{\mathcal B}_0\,:=\, \text{ad}(E_G)\, 
\stackrel{\Theta'}{\longrightarrow}\, \widetilde{\mathcal B}_1
\,:=\,\text{ad}(E_G)\otimes\Omega^1_M \,
\stackrel{\Theta'}{\longrightarrow}\, \widetilde{\mathcal B}_2
\,:=\,\text{ad}(E_G)\otimes\Omega^2_M.
\end{equation}
We note that this $\widetilde{\mathcal B}_\bullet$ and the complex
${\mathcal B}_\bullet$ in \eqref{e16} fit in the following short exact
sequences of complexes of sheaves on $M$:
$$
\begin{matrix}
&& 0 && 0 && 0\\
&& \Big\downarrow && \Big\downarrow&& \Big\downarrow\\
{\mathcal D}_\bullet & : & {\mathcal D}_0\, :=\,0 &\longrightarrow &
{\mathcal D}_1\, :=\,0 & \longrightarrow & {\mathcal D}_2\, :=\, \text{ad}(E_G)\otimes\Omega^2_M\\
&& \Big\downarrow && \Big\downarrow&& \Big\downarrow\\
\widetilde{\mathcal B}_\bullet & : & \widetilde{\mathcal B}_0\,:=\, \text{ad}(E_G) &
\stackrel{\Theta'}{\longrightarrow} & \widetilde{\mathcal B}_1\,:=\,\text{ad}(E_G)
\otimes\Omega^1_M & \stackrel{\Theta'}{\longrightarrow} & \widetilde{\mathcal B}_2\,:=\,
\text{ad}(E_G)\otimes\Omega^2_M\\
&& \Big\downarrow && \Big\downarrow&& \Big\downarrow\\
{\mathcal B}_\bullet & : & {\mathcal B}_0\,:=\, \text{ad}(E_G) &
\stackrel{\Theta'}{\longrightarrow} & {\mathcal B}_1\,:=\,\text{ad}(E_G)\otimes\Omega^1_M
& \longrightarrow & 0 \\
&& \Big\downarrow && \Big\downarrow&& \Big\downarrow\\
&& 0 && 0&& 0
\end{matrix}
$$
Let
\begin{equation}\label{e17}
{\mathbb H}^1({\mathcal D}_\bullet) \, \longrightarrow\, {\mathbb H}^1(\widetilde{\mathcal
B}_\bullet)\, \stackrel{\Phi}{\longrightarrow}\, {\mathbb H}^1({\mathcal B}_\bullet)
\end{equation}
be the corresponding long exact sequence of hypercohomologies. Since
$$
{\mathbb H}^i({\mathcal D}_\bullet)\,=\, H^{i-2}(M,\, \text{ad}(E_G)\otimes\Omega^2_M),
$$
we have ${\mathbb H}^1({\mathcal D}_\bullet)\,=\, 0$, and hence \eqref{e17} gives
an injective homomorphism
$$
\Phi\, :\, {\mathbb H}^1(\widetilde{\mathcal B}_\bullet) \, \longrightarrow\,
{\mathbb H}^1({\mathcal B}_\bullet)\, .
$$

{}From Corollary \ref{cor1}(1) we know that ${\mathbb H}^1({\mathcal B}_\bullet)$
coincides with the space of all infinitesimal deformations of the holomorphic Cartan geometry
$(M,\, E_H,\, \theta)$. Now, ${\mathbb H}^1(\widetilde{\mathcal B}_\bullet)$ coincides with
the space of all infinitesimal deformations of the flat holomorphic Cartan geometry
$(M,\, E_H,\, \theta)$ in the category of the flat holomorphic Cartan geometries. The
injective homomorphism $\Phi$ in \eqref{e17} is the natural
map that considers an infinitesimal deformation of the flat holomorphic Cartan geometry
$(M,\, E_H,\, \theta)$ in the category of the flat holomorphic
Cartan geometries as simply an infinitesimal deformation of the
holomorphic Cartan geometry $(M,\, E_H,\, \theta)$.

The kernel of the homomorphism
$$
\Theta'\, :\, {\rm ad}(E_G)\, \longrightarrow\,\text{ad}(E_G)\otimes\Omega^1_M
$$
is the local system on $M$ given by the sheaf of flat sections of ${\rm ad}(E_G)$
for the flat connection $\Theta'$. This locally constant sheaf of flat sections of
${\rm ad}(E_G)$ will be denoted by $\underline{\rm ad}(E_G)$. The
space of infinitesimal deformations ${\mathcal I}_{FC}$ of the flat connection (see
\eqref{e13}) has the following description
\begin{equation}\label{e18}
{\mathcal I}_{FC}\,=\, H^1(M,\, \underline{\rm ad}(E_G))
\end{equation}
(see \cite{Go}, \cite{GM}).

From the homomorphism of complexes of sheaves
$$
\begin{matrix}
&&\underline{\rm ad}(E_G) &\longrightarrow & 0 &\longrightarrow & 0\\
&&\Big\downarrow && \Big\downarrow && \Big\downarrow\\
\widetilde{\mathcal B}_{\bullet} & : & \widetilde{\mathcal B}_0\,:=\, \text{ad}(E_G) &
\stackrel{\Theta'}{\longrightarrow} & \widetilde{\mathcal B}_1\,:=\,
\text{ad}(E_G)\otimes\Omega^1_M
& \stackrel{\Theta'}{\longrightarrow} & \widetilde{\mathcal B}_2\,:=\,
\text{ad}(E_G)\otimes\Omega^2_M\\
\end{matrix}
$$
we conclude that
$$
H^1(M,\, \underline{\rm ad}(E_G))\,=\,{\mathbb H}^1(\widetilde{\mathcal B}_\bullet)\, .
$$
Now the theorem follow from \ref{e18} and Corollary \ref{cor1}(1).
\end{proof}

Using the Riemann--Hilbert correspondence, the space ${\mathcal I}_{FC}$ in \eqref{ifc}
is identified with the the space of all
infinitesimal deformations of the pair $(E_G,\, \theta')$ in the category of flat
holomorphic $G$--connections on the fixed complex manifold $M$.

\section*{Acknowledgements}

We are very grateful to the referee for helpful comments.
We thank Yasuhiro Wakabayashi who pointed out that the kernel of the exact sequence in the statement of
the Theorem 3.4 in \cite{BDS} is not the correct one (see \cite{Wa} Remark 6.4.5).

The first two authors were partially supported by the French government through the UCAJEDI Investments in the 
Future project managed by the National Research Agency (ANR) with the reference number 
ANR2152IDEX201. The first-named author is partially supported by a J. C. Bose Fellowship, and 
School of Mathematics, TIFR, is supported by 12-R$\&$D-TFR-5.01-0500. 

%%%%%%%%%%%%%%%%%%%%%%%%%%%%%%%%%%%%%%%%%%%%%%%%%%%%%%%%%%%%%%%%%%%%%%%%%%%%%%%%%%%%%%

\end{document}